\newtheorem{thm}{Theorem}[section]
\newtheorem{lem}[thm]{Lemma}
\newtheorem{defn}[thm]{Definition}
\numberwithin{equation}{subsection}
\title{\textbf{Existence and Uniqueness of Solutions to Nonlinear Diffusion with Memory}}
\author{\textbf{Yixian Chen}}
\begin{document}

\maketitle

\begin{abstract}
    This paper studies a nonlinear diffusion equation with memory:
    \[u_t=\nabla\cdot \big( D(x)\cdot\int_0^t K(t-s) \nabla\cdot\Phi(u(x,s))ds \big)+f(x,t).\]
    Where $K$ is memory Kernel and $D(x)$ is bounded. Under monotonicity and growth conditions on $\Phi$, the existence and uniqueness of weak solution is established. The analysis employs Orthogonal approximation, energy estimates, and monotone operator theory. The convolution structure is handled within variational frameworks. The result provides a basis for studying memory-type diffusion.
\end{abstract}

\section{Introduction}
Diffusion models with convolution-type memory were first introduced by \cite{Gurtin1968}, who incorporated a memory kernel $K(t)$ into the heat conduction equation to explain the finite propagation speed that the classical Fourier model fails to capture. Later, \cite{DAFERMOS1970554} systematically investigated such memory-type equations using energy methods and dissipation theory, and pointed out inherent difficulties concerning controllability. In parallel, \cite{JanP1993} developed a general framework combining Volterra integral equations with abstract semigroup theory, which laid the foundation for existence and uniqueness analysis of nonlinear memory PDEs.

\vspace{2mm}

\noindent In the early 2000s, mathematicians like \cite{Alesk2009} established the weak solution theory of Lions and Magenes to diffusion with memory. Later \cite{Zacher2015} combined fractional calculus with monotone operator methods to establish existence and uniqueness results for fractional and memory-driven diffusion. \cite{Li2012} further proved regularity results under degenerate kernels. \cite{SAKAMOTO2011426} contributed existence, uniqueness, and stability analyses for fractional diffusion models.

\vspace{2mm}

\noindent In the past five years, several authors advanced the theory of nonlinear diffusion with memory. \cite{Yuri2020} applied Carleman estimates and fractional integration techniques to prove weak solution uniqueness and address identifiability in related inverse problems. \cite{Alikhanov2017} construct a difference analog of the Caputo fractional derivative with generalized memory kernel (L1 formula). 

\vspace{2mm}

\noindent Despite these advances, research activity in this area has declined in recent years. On the one hand, Dafermos’s early “uncontrollability” result indicated that memory-type diffusion equations face fundamental limitations in exact control and observability. On the other hand, current PDE research trends have shifted toward fractional diffusion nonlocal operators and stochastic media models which are more attractive from both modeling and numerical perspectives. As a result, traditional convolution-type memory models, though still mathematically rich, have gradually become marginalized within the mainstream PDE community.

\pagebreak

\section{Main result}

\noindent We study a class of nonlinear diffusion equations with memory effects, given by the initial-boundary value problem:

\[
\left\{
\begin{aligned}
    &u_t = \nabla \cdot \left( D(x) \int_0^t K(t-s) \nabla \Phi(u(x,s)) \, ds \right) + f(x,t), && x \in \Omega, \ t \in [0,T] \\
    &u(x,0) = u_0(x), && x \in \Omega, \ t = 0, \\
    &u(x,t) = 0, && x \in \partial \Omega, \ t \in [0,T].
\end{aligned}
\right.
\]

\noindent Where $\Omega\subset \mathbb R^d$ is an open bounded Lipschitz domain, $D(x)$ is bounded uniformly elliptic coefficient, and $K\in L^1(0,T)$ is a memory kernel. The unknown $u(x,t)$ represents a diffusive state influenced by both spatial heterogeneity and temporal memory.

\begin{thm}
    Let $\Omega\subseteq \mathbb R^d$ be open bounded with Lipschitz boundary, $T>0$ be fixed, and memory kernel $0<K=K(t)\in L^1(0,T)$, nonlinear diffusion $\Phi\in C^1(\mathbb R)$ whose derivative is bounded by growth condition:
    \[\forall \xi\in\mathbb R,\ |\Phi'(\xi)|\leq L(1+|\xi|^{m-1}).\]
    With constant that $L>0$ and $2 \leq m\leq \min\{3,\frac{2d}{d-2}\}$, and assume the monotonicity condition: $\Phi'(\xi)\leq 0$. Let $D(x)$ be diffusion coefficient uniformly bounded by $D_{\min}$ and $D_{\max}$. Let external force term $f\in L^2(0,T;H^{-1}(\Omega))$, and $u_0\in L^2(\Omega)$ be the initial condition. Then $\exists u\in L^2(0,T;H_0^1(\Omega))$ such that for all test functions: $\varphi\in C_c^\infty(\Omega\times(0,T))$ is the solution of the partial differential equaiton, moreover the solution is unique in the sense of distributive.
\end{thm}

\section{Existence and Uniqueness of Solution in $V_N$ Space}

\begin{defn}
    Let $\{\vec{e}_i\}$ be orthogonal basis of $H_0^1(\Omega)$ consisting of eigenfunctions of the negative Dirichlet Laplacian:
    \[-\Delta \vec{e}_i=\lambda_i \vec{e}_i\text{ in }\Omega,\ \text{and }\vec{e}_i=0\text{ on }\partial\Omega.\]
    Then define the finite-dimensional:
    \[V_N:=span\{\vec{e}_1,\dots,\vec{e}_N\}\subseteq H_0^1(\Omega).\]
\end{defn}
\begin{defn}
    The projection $P^N$ is defined as:
    \[P^N:H_0^1(\Omega)\to V_N.\]
    Thus $u^N$ is defined as:
    \[u^N:=P^N u=\sum_1^N \big(\int_\Omega u\cdot\vec{e}_i \big)\vec{e}_i=\sum_1^N c_i(t)\vec{e}_i(x).\]
\end{defn}

\begin{lem}
    \cite{Perko2001}
    Let $F=F(u)$ and $F:\mathbb R^N\times[0,T]\to \mathbb R^N$ be continuous and locally Lipschitz in $u=u(t)$. Suppose $u(t)$ solves $u'=F(u,t)$ on $[0,T^*)$ and $\parallel u\parallel\leq M$, then $u(t)$ can be extended to $[0,T]$ as a classical solution. But only works in finite dimensional spaces.
    \label{perko}
\end{lem}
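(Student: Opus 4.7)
The plan is to combine local Picard–Lindelöf existence with a maximal-interval argument, and then use the uniform bound $\parallel u\parallel\leq M$ together with finite-dimensional compactness to rule out any finite-time breakdown strictly before $T$. This mirrors the standard proof of global existence for ODEs with a priori $L^\infty$ estimates.

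First I would invoke the classical Picard–Lindelöf theorem: since $F$ is continuous and locally Lipschitz in $u$, for every initial datum $(u_0,t_0)\in\mathbb{R}^N\times[0,T]$ the Cauchy problem admits a unique $C^1$ solution on some neighborhood of $t_0$. By gluing such local solutions (or, equivalently, applying Zorn's lemma to the poset of solution intervals ordered by inclusion), one obtains a maximal interval of existence $[0,T_{\max})$ with $T_{\max}\in(0,T]$ on which a unique $C^1$ solution of $u'=F(u,t)$ exists.

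Next I would argue by contradiction, assuming $T_{\max}<T$. Since $\parallel u(t)\parallel\leq M$ on $[0,T_{\max})$, the trajectory is confined to the closed ball $\overline{B_M(0)}\subset\mathbb{R}^N$, which is \emph{compact} precisely because the ambient space is finite-dimensional. Continuity of $F$ on the compact product $\overline{B_M(0)}\times[0,T]$ yields a uniform bound $\parallel F(v,s)\parallel\leq C$, whence $\parallel u'(t)\parallel\leq C$ on $[0,T_{\max})$. Consequently $u$ is uniformly Lipschitz there, and the one-sided limit $u^{*}:=\lim_{t\uparrow T_{\max}}u(t)$ exists and lies in $\overline{B_M(0)}$. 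Applying Picard–Lindelöf once more with initial data $(u^{*},T_{\max})$ produces a $C^1$ extension to $[T_{\max},T_{\max}+\delta)$; concatenating contradicts the maximality of $T_{\max}$. Hence $T_{\max}=T$, and $u$ is a classical solution on all of $[0,T]$.

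The main obstacle, and the reason the statement explicitly flags that the lemma ``only works in finite dimensional spaces,'' is exactly the compactness step: in an infinite-dimensional Banach space the closed ball fails to be (sequentially) compact, so a merely bounded trajectory need not admit a limit point as $t\uparrow T_{\max}$, and the extension argument collapses. This is why the lemma will be applied downstream only to the finite-dimensional ODE system governing the Galerkin coefficients $(c_1(t),\ldots,c_N(t))$ on $V_N$, and not to the underlying PDE itself.
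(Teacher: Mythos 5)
Your proof is correct. Note that the paper itself offers no proof of this lemma: it is quoted as a known result from the cited reference (Perko), so there is no in-paper argument to compare against. Your continuation argument --- local Picard--Lindel\"of, maximal interval $[0,T_{\max})$, boundedness of $F$ on the compact set $\overline{B_M(0)}\times[0,T]$ forcing a limit $u^*$ at $T_{\max}$, and extension past $T_{\max}$ to contradict maximality --- is exactly the standard textbook proof of this result. One small refinement: the existence of the limit $u^*$ follows from the uniform Lipschitz bound and completeness alone (and so would survive in infinite dimensions); the step that genuinely requires finite dimensionality is deducing a uniform bound on $F$ along the trajectory from mere continuity, which needs the closed ball to be compact.
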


\begin{lem}
    \cite{beckner1975} Assume $f\in L^p(\mathbb R^d)$ and $L^q(\mathbb R^d)$ with:
    \[\frac{1}{p}+\frac{1}{q}=\frac{1}{r}+1; \ p,q,r\in [1,+\infty].\]
    There exists:
    \[\parallel f*g\parallel_{L^r}\leq \parallel f\parallel_{L^p}\cdot\parallel g\parallel_{L^q}.\]
    \label{beckner}
\end{lem}

\begin{thm}
    Let $V_N:=span\{ \vec{e}_1,\dots, \vec{e}_N \}$ be a finite-dimensional subspace. $\exists !u^N\in L^2(0,T;V_n)\subseteq L^2(0,T;H_0^1(\Omega))$ satisfy the weak formula:
    \[\int_0^T \int_\Omega \partial_t u^N\varphi+\int_0^T\int_\Omega D(x)\bigg(M(u^N(x,t))\bigg) \cdot\nabla\varphi=\int_0^T\int_\Omega f\varphi.\]
\end{thm}
\begin{proof}
    Define the middle term:
    \[a(t,u^N,\varphi):=\int_\Omega D(x)\bigg( \int_0^t K(t-s)\nabla \Phi(u^N) ds \bigg)\nabla \varphi dx.\]
    Using test function $\varphi:=\vec{e}_i$:
    \[c_i '(t)+a(t,u^N,\vec{e}_i)=\int_\Omega f\cdot\vec{e}_i.\]
    Then define system:
    \[\begin{pmatrix}
        c_1'(t)\\c_2'(t)\\\dots\\c_N'(t)
    \end{pmatrix}=:c'(t)=F(t,c(t)):=(F_j(t,c(t)))_{N\times 1}:=\begin{pmatrix}
        \big<f,\vec{e}_1\big>_{L^2(\Omega)}-a(t,u^N,\vec{e_1})\\
        \big<f,\vec{e}_2\big>_{L^2(\Omega)}-a(t,u^N,\vec{e_2})\\
        \dots\\
        \big<f,\vec{e}_N\big>_{L^2(\Omega)}-a(t,u^N,\vec{e_N})
    \end{pmatrix}\]
    Which is an ODE system. Continuous dependence on initial condition yielding a perturbation from $c$ to $\tilde{c}$. Then: 
        \[|\nabla \Phi(u^N)-\nabla \Phi(\tilde{u}^N)|\leq L(1+|u^N|^{m-2}+|u|^{m-2})|u^N-\tilde{u}^N|.\]
        It's well posed because of the range of $m\in [2,\min\{3,\frac{2d}{d-2}\}]$. While,
        \[|u^N-\tilde{u}^N|=|\sum_1^N  (c_i-\tilde{c}_i)\vec{e}_i|\leq \sqrt{\frac{\sum_1^N (c_i-\tilde{c}_i)^2 }{N}}\cdot\sqrt{|\sum_1^N \big< \vec{e}_i,\vec{e}_i \big> |} =  \sqrt{\frac{\sum_1^N (c_i-\tilde{c}_i)^2 }{N}} \cdot \sqrt{N} =\parallel c-\tilde{c}\parallel_{\mathbb R^N}.\]
        Then for integral of nonlinear term, and again apply \eqref{beckner} to reduce the convolution part of $K$, there is:
        \[\parallel F(c,t)- F(\tilde{c},t) \parallel_{\mathbb R^N}\leq\parallel D\parallel_{L^\infty}\cdot\parallel K\parallel_{L^1 }\cdot\parallel c-\tilde{c} \parallel_{\mathbb R^N}\int_0^t\int_\Omega L(1+|u^N|^{m-2}+|\tilde{u}^N|^{m-2}).\]
    Since $\forall t\in [0,T]$, and by Minkovskii inequality:
    \[\int_0^t\int_\Omega L(1+|u^N|^{m-2}+|\tilde{u}^N|^{m-2})\leq L(T|\Omega|+\parallel u^N\parallel_{L^2(0,T;L^2(\Omega))}^{\frac{m-2}{2}}+\parallel\tilde{u}^N\parallel_{L^2(0,T;L^2(\Omega))}^{\frac{m-2}{2}}).\]
    Which is bounded in $V_N$. Thus, we know that $F(c,t)$ is locally Lipschitz. Then by Picard-Lindelof theorem, a unique local solution exists on: $u^N\in L^2(0,T^*;V_N)$ where $T^*\leq T$ is a constant. Our interest is whether a global solution can be uniquely extended. Using \eqref{perko}, we have the unique extended solution. 
\end{proof}

\section{Energy Estimate and Convergence}
In this chapter, we construct approximate solutions. Our goal is to rigorously derive the uniform energy estimates which will be used further to prove the existence of weak solutions.

\begin{lem}
    \cite{bainov1992}
    Let $u$ $a$ $k$ be non-negative continuous functions in interval $[\alpha,\beta]$ and suppose:
    \[u(t)\leq a(t)+\int_\alpha^t k(s)u^p(s).\]
    Where $0<p<1$, then 
    \[u(t)\leq a(t)+x_0^p\bigg(\int_\alpha^t k^{\frac{1}{q}}(s)ds  \bigg)^{q}.\]
    Where $q=1-p$, $x_0$ be the unique positive root:
    \[x-c_1-c_2x^p=0,\ c_1=\int_\alpha^\beta \alpha(s)ds,\ c_2=\int_\alpha^\beta\bigg(\int_\alpha^t k^{\frac{1}{q}}(s)ds\bigg)^qdt.\]
    \label{bainov}
\end{lem}

\begin{lem}
    \cite{friedman1980} For $\sigma$ finite measure spaces $X,Y$, $f$ is a measurable function:
    \[\int_X\int_Yf(x,y)=\int_Y\int_Xf(x,y)=\int_{X\times Y}f(x,y).\]
    If one of the above integral is finite.
    \label{friedman}
\end{lem}

\begin{thm}
    $\parallel u^N\parallel_{L^2(0,T;H_0^1(\Omega))}$, $\parallel \partial_t u^N\parallel_{L^2(0,T;H^{-1}(\Omega))}$ are bounded by a constant independent with $N$.
\end{thm}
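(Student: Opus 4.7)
The plan is to derive the two bounds via an energy identity on the Galerkin equation, together with a Grönwall-type closure furnished by Lemma \eqref{bainov}. Since $u^N(t) \in V_N \subset H_0^1(\Omega)$, it is an admissible test function in the approximate equation, so testing $\partial_t u^N = \nabla\cdot(D(x,w)\,M(u^N)) + f$ against $u^N(t)$ and integrating by parts in $\Omega$ produces the identity
\[
\tfrac{1}{2}\tfrac{d}{dt}\parallel u^N(t)\parallel_{L^2(\Omega)}^2 = -\int_\Omega D(x,w)\,M(u^N)(x,t)\cdot\nabla u^N(x,t)\,dx + \big\langle f(t),u^N(t)\big\rangle_{H^{-1},H_0^1},
\]
where the boundary trace vanishes because $u^N \in H_0^1$.

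The memory term is handled by using $D \leq D_{\max}$ in $L^\infty(\Omega)$ together with Young's convolution inequality from Lemma \eqref{beckner} (with $p=1,\,q=r=2$), which gives
\[
\parallel M(u^N)\parallel_{L^2(0,T;L^2)} \leq \parallel K\parallel_{L^1(0,T)}\,\parallel \nabla\Phi(u^N)\parallel_{L^2(0,T;L^2)}.
\]
To estimate $\parallel \nabla\Phi(u^N)\parallel_{L^2}$ I would apply the growth assumption $|\Phi'(\xi)|\leq L(1+|\xi|^{m-1})$ and split the product; the term $\parallel |u^N|^{m-1}\nabla u^N\parallel_{L^2}$ is controlled by Hölder's inequality and the Sobolev embedding $H_0^1(\Omega)\hookrightarrow L^{2d/(d-2)}$, which is precisely where the assumption $m\leq \min\{3,\,2d/(d-2)\}$ enters. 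The forcing term is dispatched by the standard Young inequality $|\langle f,u^N\rangle|\leq \tfrac{1}{2}\parallel f\parallel_{H^{-1}}^2+\tfrac{1}{2}\parallel u^N\parallel_{H_0^1}^2$. Combining everything, integrating in time, and absorbing small multiples of $\parallel \nabla u^N(t)\parallel_{L^2}^2$ into the left-hand side yields an integral inequality of exactly the shape required by Lemma \eqref{bainov}, closing the estimate and producing $\parallel u^N\parallel_{L^2(0,T;H_0^1)}\leq C$ with $C$ independent of $N$.

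For the $H^{-1}$ bound on $\partial_t u^N$, I would test the Galerkin equation against an arbitrary $v\in H_0^1(\Omega)$. Because $\partial_t u^N\in V_N$, only the projection $P^N v$ contributes to the pairing, and since $\{\vec{e}_i\}$ is simultaneously $L^2$- and $H_0^1$-orthogonal we have $\parallel P^N v\parallel_{H_0^1}\leq \parallel v\parallel_{H_0^1}$. Applying the Galerkin identity and taking the supremum over the unit ball of $H_0^1$ yields
\[
\parallel \partial_t u^N(t)\parallel_{H^{-1}(\Omega)}\leq D_{\max}\parallel M(u^N)(t)\parallel_{L^2(\Omega)}+\parallel f(t)\parallel_{H^{-1}(\Omega)},
\]
and squaring then integrating in $t$ bounds the left-hand side by $D_{\max}\parallel K\parallel_{L^1}\parallel \nabla\Phi(u^N)\parallel_{L^2(0,T;L^2)}+\parallel f\parallel_{L^2(0,T;H^{-1})}$, both already controlled by the first step.

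The main obstacle is closing the nonlinear estimate in a Grönwall-compatible form. Since $u^N\in H_0^1$ is not embedded in $L^\infty$ for $d\geq 2$, one cannot simply pull $\Phi'(u^N)$ out in sup-norm; instead the polynomial growth exponent $m-1$ must be distributed against the Sobolev exponent $2^*=2d/(d-2)$ via Hölder interpolation, and the restriction $m\leq \min\{3,2^*\}$ is exactly what keeps the resulting power of $\parallel u^N\parallel_{H_0^1}$ subcritical enough that Lemma \eqref{bainov} delivers a finite bound. A secondary subtlety is verifying that the $L^2$-projection $P^N$ onto Dirichlet-Laplacian eigenfunctions is simultaneously an $H_0^1$-contraction; this is crucial for the $H^{-1}$ dual estimate and relies on the orthogonality of $\{\vec{e}_i\}$ in both inner products.
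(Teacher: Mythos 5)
Your overall strategy (test with $u^N$, control the memory term by Young's convolution inequality from Lemma \eqref{beckner}, close with the nonlinear Gr\"onwall Lemma \eqref{bainov}, then dualize for $\partial_t u^N$) matches the paper's in outline, and your treatment of the time-derivative bound --- testing against arbitrary $v\in H_0^1(\Omega)$, using that $P^N$ is simultaneously $L^2$- and $H_0^1$-orthogonal, and taking a supremum --- is the standard route and is in fact cleaner than the paper's (which tries to read off $\|\partial_t u^N\|_{H^{-1}}^2+\|u^N\|_{H_0^1}^2$ from the identity $E'(t)=2\int_\Omega \partial_t u^N\, u^N$, a step that does not follow). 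However, there is a genuine gap at the heart of your first estimate. After testing with $u^N(t)$, the left-hand side is only $\tfrac12\tfrac{d}{dt}\|u^N(t)\|_{L^2(\Omega)}^2$; the diffusion term does \emph{not} contribute a coercive $+\|\nabla u^N(t)\|_{L^2}^2$ to the left-hand side, because the memory kernel pairs $\nabla u^N(\cdot,t)$ with $\nabla\Phi(u^N(\cdot,s))$ at earlier times $s<t$ rather than with itself. Consequently the step ``absorbing small multiples of $\|\nabla u^N(t)\|_{L^2}^2$ into the left-hand side'' has nothing to absorb into, and, more seriously, the resulting integral inequality can only bound $\|u^N(t)\|_{L^2}$, not the asserted $\|u^N\|_{L^2(0,T;H_0^1)}$. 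The paper confronts this by splitting the memory term with a weighted Young inequality and choosing the weight $\lambda$ so that the $\tfrac{\lambda}{2}\|\nabla u^N\|_{L^2}$ contribution is cancelled against the forcing term via Poincar\'e; you need some substitute for that cancellation, or a genuine coercivity argument for the convolution form (e.g.\ positive-definiteness of $K$ combined with the monotonicity of $\Phi$), before Lemma \eqref{bainov} can be invoked.

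A second, related problem is your claim that $\|\nabla\Phi(u^N)\|_{L^2(0,T;L^2)}$ is controlled by H\"older and the Sobolev embedding. Writing $\nabla\Phi(u^N)=\Phi'(u^N)\nabla u^N$ and estimating $\||u^N|^{m-1}\nabla u^N\|_{L^2}$ by H\"older forces $\nabla u^N$ into $L^r$ for some $r>2$ (or $u^N$ into $L^\infty$), neither of which is available for $d\ge 2$; distributing the exponent $m-1$ against $2^*=2d/(d-2)$ does not rescue this, because the gradient factor already exhausts the $L^2$ budget. The paper avoids the product entirely: it separates $\Phi'(u^N(s))$ from $\nabla u^N(t)$ with Young's inequality so that only $\|\Phi'(u^N(s))\|_{L^2}\lesssim 1+\|u^N(s)\|_{L^{2(m-1)}}^{m-1}$ appears, which the restriction $m\le 3$ allows one to interpolate down to a sublinear power of $E(s)=\|u^N(s)\|_{L^2}^2$; this is precisely what produces the exponent $p=(m-1)/4<1$ needed for Lemma \eqref{bainov}. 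As written, your nonlinear estimate does not close, and without it neither the $H_0^1$ bound nor the subsequent $H^{-1}$ bound (which reuses the control of $\|M(u^N)\|_{L^2(0,T;L^2)}$) is established.
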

\begin{proof}
    Take test function $\varphi:=u^N$, then:
    \[\frac{1}{2}\frac{d}{dt}\parallel u^N(t)\parallel_{L^2(\Omega)}^2=\int_\Omega \partial_t u^N\cdot u^N.\]
    Define this term as the energy $E(t)$, then:
    \[\frac{1}{2}\frac{d}{dt} E(t)=\int_\Omega f\cdot u^N-D\cdot\big(\int_0^t K(t-s)\Phi'(u^N)\nabla u^N(x,s)\big)\cdot\nabla u^N.\]
    The key point is to estimate the middle term:
    \[\bigg|\int_\Omega D\cdot\int_0^t K(t-s)\Phi'(u^N(x,s))\cdot\nabla u^N(x,t)\bigg|.\]
    First use \eqref{friedman} and unifom bound of $D$, we have:
    \[\parallel D\parallel_{L^\infty}\cdot \int_0^t \int_\Omega \bigg|K(t-s)\Phi'(u^N(x,s))\cdot\nabla u^N(x,t)\bigg|.\]
    Using Cauchy inequality and young inequality:
    \[\parallel D\parallel_{L^\infty} \int_0^t K(t-s)\bigg(\frac{1}{2\lambda}\parallel \Phi'(u^N(s))\parallel_{L^2(\Omega)} +\frac{\lambda}{2}\parallel \nabla u^N(t)\parallel_{L^2(\Omega)} \bigg).\]
    Now the integral is split into 2 parts, the first part is easy,
    \[\parallel D\parallel_{L^\infty}\int_0^t K(t-s)\frac{\lambda}{2}\parallel\nabla u^N(t)\parallel_{L^2(\Omega)}\leq  \parallel D\parallel_{L^\infty}\cdot \parallel K\parallel_{L^1}\cdot\frac{\lambda}{2}\parallel \nabla u^N(t)\parallel_{L^2(\Omega)}.\]
    The second part need growth condition:
    \[\leq \parallel D\parallel_{L^\infty}\frac{1}{2\lambda}\int_0^t K(t-s)\parallel L(1+|u^N(s)|^{m-1})\parallel_{L^2(\Omega)}.\]
    According to Minkovskii inequality and apply \eqref{beckner}:
    \[\leq \parallel D\parallel_{L^\infty}\frac{1}{2\lambda}\parallel K\parallel_{L^1}\cdot L\bigg( |\Omega|^{\frac{1}{2}}+ |\Omega|^{\frac{3-m}{2}} \int_0^t \parallel u^N(s)\parallel_{L^2(\Omega)}^{\frac{m-1}{2}} \bigg).\]
    Since $\Phi'\leq 0$, so there is inequality:
    \[\frac{1}{2}|\frac{d}{dt}E(t)|\leq \bigg|\parallel D\parallel_{L^\infty} \parallel K\parallel_{L^1}\bigg[ \frac{\lambda}{2}\parallel \nabla u^N\parallel_{L^2(\Omega)}  .\]
    \[+\frac{1}{2\lambda} \cdot L \cdot\bigg( |\Omega|^{\frac{1}{2}} +|\Omega|^{\frac{3-m}{2}} \int_0^t E^{\frac{m-1}{4}}(s) \bigg)\bigg]- \parallel f\parallel_{H^{-1}(\Omega)}\cdot\parallel u^N\parallel_{L^2(\Omega)}\bigg|.\]
    Since the minus sign lies within the absolute value, we focus on estimating the following term:
    \[\parallel D\parallel_{L^\infty}\cdot\parallel K\parallel_{L^1}\cdot\frac{\lambda}{2}\cdot\parallel\nabla u^N\parallel_{L^2(\Omega)}-\parallel f\parallel_{H^{-1}}\cdot\parallel u^N\parallel_{L^2(\Omega)}.\]
    By Poincare inequality from zero trace:
    \[\parallel u^N\parallel_{L^2(\Omega)}\leq C_\Omega\cdot\parallel \nabla u^N\parallel_{L^2(\Omega)}.\]
    Therefore, by selecting:
    \[\frac{\lambda}{2}= \frac{C_\Omega \cdot\parallel f\parallel_{H^{-1}(\Omega)}}{\parallel D\parallel_{L^\infty}\cdot\parallel K\parallel_{L^1}}.\]
    We know the value in absolute sign remain positive, thus:
    \[\frac{1}{2}|\frac{d}{dt}E(t)|\leq\parallel D\parallel_{L^\infty} \parallel K\parallel_{L^1}\bigg[ \frac{\lambda}{2}\parallel \nabla u^N\parallel_{L^2(\Omega)}.\]
    \[+\frac{1}{2\lambda} \cdot L \cdot\bigg( |\Omega|^{\frac{1}{2}} +|\Omega|^{\frac{3-m}{2}} \int_0^t E^{\frac{m-1}{4}}(s) \bigg)\bigg]- \parallel f\parallel_{H^{-1}(\Omega)}\cdot\parallel u^N\parallel_{L^2(\Omega)}.\]
    \[\leq \parallel D\parallel_{L^\infty} \parallel K\parallel_{L^1}\cdot\frac{1}{2\lambda} \cdot L \cdot\bigg( |\Omega|^{\frac{1}{2}} +|\Omega|^{\frac{3-m}{2}} \int_0^t E^{\frac{m-1}{4}}(s) \bigg).\]
    Which is a Gronwall type inequality, if we denote:
    \[C_1:=\frac{\parallel D\parallel_{L^\infty}^2\cdot \parallel K\parallel_{L^1}^2\cdot L\cdot |\Omega|^{\frac{1}{2}}}{2C_\Omega\cdot\parallel f\parallel_{H^{-1}(\Omega)}}.\]
    \[C_2:=\frac{\parallel D\parallel_{L^\infty}^2\cdot \parallel K\parallel_{L^1}^2\cdot L\cdot |\Omega|^{\frac{3-m}{2}}}{2C_\Omega\cdot\parallel f\parallel_{H^{-1}(\Omega)}}.\]
    \[p:=\frac{m-1}{4}.\]
    That is:
    \[|\frac{d}{dt}E(t)|\leq C_1+C_2\int_0^t E^{p}(s)ds.\]
    Since $p=\frac{m-1}{4}\leq \frac{1}{2}<1$, then:
    \[E(t)-E_0=\int_0^t E'(s)\leq \int_0^t C_1+C_2\int_0^s E^p(u).\]
    Thus a new inequality is deduced:
    \[E(t)\leq E(0)+C_1 t+C_2\int_0^t \int_0^s E^p(r)drds.\]
    Then we can apply the nonlinear estimate inequality \eqref{bainov} and get:
    \[E(t)\leq E(0)+C_1 t+x_0^p C_2 t^{1-p}.\]
    Where: $x_0$ is the only root of:
    \[ x=E_0T+\frac{1}{2}C_1T^2+\frac{C_2T^{2-p}}{1-p}x^p.\]
    Then we have the estimate on $E(t)$, in other words: $\parallel u^N\parallel_{L^2(0,T;L^2(\Omega))}^2$:
    \[x_0\leq E_0T+\frac{1}{2}C_1T^2+\frac{C_2T^{2-p}}{1-p}\cdot\max\{1,T^p\}.\]
    \[0\leq E(t)\leq E_0+C_1T+x_0^p\cdot C_2\cdot \max\{1,T,T^{1-p}\}.\]
    Observe that the use of the $\max{}$ function addresses the case $T<1$ which would otherwise lead to the inequality $T^p>T$ when $p<1$. As a result, $E(t)$ is uniformly bounded by a constant, denoted by $J$. Returning to the original equation, we obtain:
    \[\int_0^T \int_\Omega \partial_t u^N\cdot u^N\leq J.\]
    Recall that:
    \[|\frac{d}{dt}E(t)|\leq C_1+C_2\int_0^t E^{p}(s)ds.\]
    Is a bound for $E'(t)$, by uniform bound of $E$ on right side:
    \[|E'(t)|\leq C_1+C_2+J^p T.\]
    We finalized our conclusion. Because, back to the equation:
    \[E'(t)=2\int_\Omega \partial_tu\cdot u.\]
    First we apply Cauchy inequality, this process is well-posed, because we know $E'$ is uniformly bounded. Then:
    \[\parallel \partial_t u(t)\parallel_{H^{-1}(\Omega)}^2+\parallel u(t)\parallel_{H_0^1(\Omega)}^2\leq C_1+C_2+J^pT.\]
    Then integral over time gives the upper uniform bound:
    \[\parallel \partial_t u\parallel_{L^2(0,T;H^{-1}(\Omega))}^2+\parallel u\parallel_{L^2(0,T;H_0^1(\Omega)}\leq 2\sqrt{T}(C_1+C_2+J^p T).\]
    One may observe that:
    \[(\int_0^T 1^2 )^{\frac{1}{2}}=\sqrt{T}.\]
    In wchich:
    \[C_1=\frac{\parallel D\parallel_{L^\infty}^2\cdot \parallel K\parallel_{L^1}^2\cdot L\cdot |\Omega|^{\frac{1}{2}}}{2C_\Omega\cdot\parallel f\parallel_{H^{-1}(\Omega)}}.\]
    \[C_2=\frac{\parallel D\parallel_{L^\infty}^2\cdot \parallel K\parallel_{L^1}^2\cdot L\cdot |\Omega|^{\frac{3-m}{2}}}{2C_\Omega\cdot\parallel f\parallel_{H^{-1}(\Omega)}}.\]
    \[J=\parallel u_0\parallel_{L^2(\Omega)}+C_1T+\bigg( \parallel u_0\parallel_{L^2(\Omega)}T+\frac{1}{2}C_1T^2+\frac{C_2T^{2-p}}{1-p}\cdot\max\{1,T^p\} \bigg)^p\cdot C_2\cdot\max\{ 1,T,T^{1-p} \}.\]
    \[p=\frac{m-1}{4}.\]
\end{proof}

\begin{lem}
    \cite{kothe1960}
    $X$ be normed linear space, $X^*$ be its dual space. The unit closed ball:
    \[B:=\{f\in X^*:\parallel f\parallel_{X^*}\leq 1\}\]
    is compact under weak* topology.
    \label{kothe}
\end{lem}

\begin{lem}
    Let $\{u^N\}$ be $L^2(0,T;H_0^1(\Omega))$ function sequences with energy estimates bounded by a constant independent with $N$, $\exists u\in L^2(0,T;H_0^1(\Omega))$, such that:
    \[u^N(t,\cdot)\rightharpoonup u(t,\cdot),\ \partial_t u^N(t,\cdot) \rightharpoonup v(t,\cdot)\]
    by weak convergence in $H_0^1(\Omega)$ and $H^{-1}(\Omega)$. i.e. the 2 sequences all have their own limitation.
\end{lem}

\begin{proof}
    Rellich-Kondrachov theorem implies:
    \[H_0^1(\Omega)\hookrightarrow\hookrightarrow L^2(\Omega)\]
    Thus an embedding chain exists:
    \[H_0^1\hookrightarrow\hookrightarrow L^2\hookrightarrow H^{-1}\]
    Hereby with energy estimate: $u^N(t,\cdot)$ bounded in $H_0^1(\Omega)$ and $\partial_t u^N$ bounded in $H^{-1}(\Omega)$, we immediately know by \eqref{kothe} that:
    \[\{u^{N}(t,\cdot)\}\rightharpoonup u(t,\cdot)\in H_0^1(\Omega)\]
    by weak convergence. And 
    \[\{\partial_t u^{N}(t,\cdot)\}\rightharpoonup v(t,\cdot)\in H^{-1}(\Omega)\]
    But we still have to classify whether the identity $\partial_t u=v$ holds?
\end{proof}

\begin{thm}
    For $\{u^N\}\subset L^2(0,T;H_0^1(\Omega))$ with condition that: $\partial_t u^N(t,\cdot) \rightharpoonup v(t,\cdot)\in H^{-1}(\Omega)$ and $u^N(t,\cdot)\rightharpoonup u(t,\cdot)\in H_0^1(\Omega)$, then
    \[\partial_t u=v\]
    in the sense of distributional derivative, i.e. for all test functions: $\phi\in C_c^\infty(0,T)$ and $\psi\in H_0^1(\Omega)$, 
    \[\big<\partial_t u,\phi\psi\big>_{H^{-1},H_0^1}=-\big< u,\partial_t \phi\psi\big>_{H^{-1},H_0^1}\]
\end{thm}

\begin{proof}
    Use weak convergence for $u^N(t,\cdot)$ and $\partial_t u^N(t,\cdot)$:
    \[\lim \int_0^T \big< u^N,\partial_t\phi(t)\psi\big>_{H^{-1},H_0^1}=\int_0^T \big< u,\partial_t\phi(t)\psi\big>_{H^{-1},H_0^1}\]
    (Above is well-posed because $\partial_t \phi\psi\in L^2(0,T;H_0^1(\Omega))$) and that:
    \[\lim \int_0^T \big< \partial_t u^N,\phi(t)\psi\big>_{H^{-1},H_0^1}=\int_0^T \big< u',\phi(t)\psi\big>_{H^{-1},H_0^1}\]
    For each $u^N$, its distributional derivative satisfy:
    \[-\big< u^N,\partial_t\phi(t)\psi\big>_{H^{-1},H_0^1}=\big< \partial_t u^N,\phi(t)\psi\big>_{H^{-1},H_0^1}\]
    By take $N\to\infty$, we have the desired formula with $\forall \phi\in C_c^\infty(0,T)$ and $\psi\in H_0^1(\Omega)$. Now use the fact that:
    \[\overline{C_c^\infty(0,T)\otimes H_0^1(\Omega)\cap L^2(0,T;H_0^1(\Omega))}=L^2(0,T;H_0^1(\Omega))\]
    and the fact that distributional derivative is uniquely determined by dual pairing.
\end{proof}

\begin{thm}
    Let $\{u^N\}$ be $L^2(0,T;H_0^1(\Omega))$ function sequences with energy estimates bounded by a constant independent with $N$, and $u^N(t,\cdot)$ weakly converge to $u(t,\cdot)\in H_0^1(\Omega)$, then without loss of generalcity:
    \[u^N\to u\]
    by strongly convergence in $L^2(0,T;L^2(\Omega))$.
\end{thm}

\begin{proof}
    The result is trivial, take the integral, use \eqref{friedman} change the order of integral, then use the uniform bound deduced above, the difference is less or equal to a multiplication of a bounded number and small number.
\end{proof}

\section{Identification of Limit as a Solution}
Last chapter provide us with some basic results:
\[
\left\{
\begin{aligned}
    & u^N(t,\cdot)\rightharpoonup u(t,\cdot), && H_0^1(\Omega)\\
    & u^N\to u, && L^2(0,T;L^2(\Omega))\\
    &\partial_t u^N(t,\cdot)\rightharpoonup \partial_t u(t,\cdot) && H^{-1}(\Omega)
\end{aligned}
\right.
\]
This section aims to verify that the limit function $u$ satisfies the weak formulation of the original nonlinear equation. That is, for any test function $\varphi\in C_c^\infty(\Omega\times(0,T))$,
\[-\int_0^T \int_\Omega u\partial_t\varphi=-\int_0^T\int_\Omega D(x)\bigg(K(t-s)\nabla \Phi(u(x,s))\bigg) \cdot\nabla\varphi+\int_0^T\int_\Omega f\varphi\]

\begin{lem}
    \cite{bogachev2007} $\Omega$ be bounded, let $\{u^{N}\}$ be a sequence of function with $u^N\to u$ by strong convergence in $L^2(0,T;L^2(\Omega))$ such that $\exists C>0$:
    \[\sup \parallel u^N \parallel_{L^2(0,T;L^2(\Omega))}\leq C \]
    Then $\exists$ subsequence $\{ N_k\}$ of $\{N\}$:
    \[u^{N_k}\to u\text{ a.e. }\Omega\times (0,T)\]
    \label{bogachev}
\end{lem}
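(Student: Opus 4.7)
This lemma is the classical fact that norm convergence in $L^p$ implies almost-everywhere convergence along a subsequence, adapted to the product domain $\Omega\times(0,T)$. The cited uniform $L^2$-bound is not actually needed for the conclusion (strong convergence in $L^2(0,T;L^2(\Omega))$ already implies the $L^2$ norms are bounded); the essential hypothesis is the strong convergence. My plan is to reduce the Bochner-space convergence to scalar $L^2$-convergence on $\Omega\times(0,T)$ and then invoke the standard Riesz-type subsequence extraction.

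\textbf{Step 1: Reduction to $L^2$ on the product space.} Since $\Omega\subset\mathbb{R}^d$ is bounded (hence $\sigma$-finite) and $(0,T)$ is a finite interval, Fubini--Tonelli yields the canonical isometry
\[
L^2(0,T;L^2(\Omega)) \cong L^2\bigl(\Omega\times(0,T)\bigr),
\]
under which $\|u^N-u\|_{L^2(0,T;L^2(\Omega))}^2=\int_0^T\!\!\int_\Omega |u^N-u|^2\,dx\,dt$. Hence $u^N\to u$ strongly in $L^2(0,T;L^2(\Omega))$ is identical to $u^N\to u$ in $L^2(\Omega\times(0,T))$.

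\textbf{Step 2: Extract a fast subsequence and apply Chebyshev.} Strong convergence lets me pick indices $N_1<N_2<\cdots$ with $\|u^{N_k}-u\|_{L^2(\Omega\times(0,T))}\leq 2^{-k}$. For each fixed $\varepsilon>0$, Chebyshev's inequality gives
\[
\bigl|\{(x,t)\in\Omega\times(0,T):|u^{N_k}(x,t)-u(x,t)|>\varepsilon\}\bigr|\leq \varepsilon^{-2}\,4^{-k},
\]
so $\sum_k |\{|u^{N_k}-u|>\varepsilon\}|<\infty$. By the Borel--Cantelli lemma, the set $E_\varepsilon$ of $(x,t)$ that belong to infinitely many of these level sets has measure zero; on its complement, $\limsup_k |u^{N_k}-u|\leq\varepsilon$.

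\textbf{Step 3: Countable intersection to conclude.} Letting $\varepsilon$ run over $\{1/n\}_{n\in\mathbb{N}}$ and setting $E=\bigcup_n E_{1/n}$, the set $E$ is still null, and for every $(x,t)\in (\Omega\times(0,T))\setminus E$ we have $\limsup_k|u^{N_k}(x,t)-u(x,t)|\leq 1/n$ for all $n$, i.e.\ $u^{N_k}(x,t)\to u(x,t)$. This produces the desired a.e.\ convergent subsequence.

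\textbf{Main obstacle.} Honestly, there is no serious obstacle: the argument is bookkeeping around the Riesz subsequence principle. The only subtle point worth stating cleanly is the Fubini identification in Step 1, since the lemma is phrased in Bochner form while the standard $L^p$--a.e.\ result is scalar-valued; once that identification is made, the rest is a direct Chebyshev--Borel--Cantelli extraction.
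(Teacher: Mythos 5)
Your proof is correct. The paper does not prove this lemma at all --- it is stated as an imported result with a citation to Bogachev --- so there is no in-paper argument to compare against; your Fubini identification of $L^2(0,T;L^2(\Omega))$ with $L^2(\Omega\times(0,T))$ followed by the fast-subsequence Chebyshev--Borel--Cantelli extraction is exactly the standard proof one would supply, and your observation that the uniform norm bound in the hypothesis is redundant (strong convergence already forces boundedness) is also accurate.
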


\begin{lem}
    For $\{u^N\}\subseteq L^2(0,T;H_0^1(\Omega))$ with uniform norm bound, and $u^N \to u$ strongly in $L^2(0,T;L^2(\Omega))$, then:
    \begin{itemize}
        \item $u\in L^2(0,T;H_0^1(\Omega))$;
        \item $\nabla u^N \rightharpoonup \nabla u$ in $L^2(0,T;L^2(\Omega))$.
    \end{itemize}
    \label{Gradient weak convergence}
\end{lem}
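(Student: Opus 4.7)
The plan is to exploit reflexivity of the Hilbert space $L^2(0,T;H_0^1(\Omega))$ together with the given strong $L^2$ convergence. First, since $\{u^N\}$ is uniformly bounded in the norm of $L^2(0,T;H_0^1(\Omega))$ and this space is reflexive, Lemma \ref{kothe} (applied via the canonical isomorphism identifying a reflexive space with its bidual, so that weak and weak$^*$ topologies coincide) yields a subsequence $\{u^{N_k}\}$ and some $v\in L^2(0,T;H_0^1(\Omega))$ with $u^{N_k}\rightharpoonup v$ weakly in $L^2(0,T;H_0^1(\Omega))$.

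Next, I would identify the weak limit $v$ with the strong $L^2$-limit $u$. The inclusion $H_0^1(\Omega)\hookrightarrow L^2(\Omega)$ is continuous, so the induced embedding $L^2(0,T;H_0^1(\Omega))\hookrightarrow L^2(0,T;L^2(\Omega))$ is bounded and linear, hence weak-to-weak continuous. Therefore $u^{N_k}\rightharpoonup v$ also in $L^2(0,T;L^2(\Omega))$. But by hypothesis $u^N\to u$ strongly, hence weakly, in $L^2(0,T;L^2(\Omega))$; by uniqueness of weak limits, $v=u$. This simultaneously establishes that $u\in L^2(0,T;H_0^1(\Omega))$.

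For the second conclusion, the gradient $\nabla\colon L^2(0,T;H_0^1(\Omega))\to L^2(0,T;L^2(\Omega;\mathbb{R}^d))$ is a bounded linear operator (indeed, an isometric component of the $H_0^1$ norm), so it preserves weak convergence; hence $\nabla u^{N_k}\rightharpoonup \nabla u$ in $L^2(0,T;L^2(\Omega))$. To upgrade from the extracted subsequence to the whole sequence, I would invoke the standard subsequence-of-subsequence principle: the argument above applies to any subsequence of $\{u^N\}$, always producing a further subsequence that weakly converges to the \emph{same} limit $u$ (whose identification no longer depends on which subsequence we started with). Consequently $u^N\rightharpoonup u$ in $L^2(0,T;H_0^1(\Omega))$, and therefore $\nabla u^N\rightharpoonup \nabla u$ in $L^2(0,T;L^2(\Omega))$.

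The step requiring the most care is the identification of the weak $H_0^1$-limit with the strong $L^2$-limit; without this, the argument would only produce weak convergence along an unspecified subsequence to an \emph{a priori} unidentified element. Once that identification is secured, the weak convergence of the gradients and the passage from subsequence to full sequence are immediate consequences of boundedness of $\nabla$ and the uniqueness of the limit.
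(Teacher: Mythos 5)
Your proof is correct, and it takes a noticeably different (and cleaner) route than the paper. The paper's own argument extracts an almost-everywhere convergent subsequence of $\{u^N\}$ via Lemma \ref{bogachev}, then extracts a weakly convergent subsequence of $\{\nabla u^N\}$ via Lemma \ref{kothe}, and identifies the weak limit $v$ with $\nabla u$ by applying Lemma \ref{bogachev} \emph{again} to the gradients and invoking dominated convergence with dominating function $(c+1)|g|$; the upgrade to the full sequence is then argued by contradiction through a normalization and compactness claim. Your argument instead stays entirely at the level of functional analysis: weak compactness of the bounded sequence in the reflexive space $L^2(0,T;H_0^1(\Omega))$, identification of the weak limit with the strong $L^2$ limit through the weak-to-weak continuity of the continuous embedding $L^2(0,T;H_0^1(\Omega))\hookrightarrow L^2(0,T;L^2(\Omega))$, boundedness of $\nabla$ as a linear operator (the paper's own Lemma \ref{Weak convergence preserve by bounded linear functional} would serve here), and the subsequence-of-subsequence principle. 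What your route buys is that it sidesteps the most fragile steps of the paper's proof: almost-everywhere convergence of $u^N$ does \emph{not} imply almost-everywhere convergence of $\nabla u^N$, so the paper's dominated-convergence identification of $v$ with $\nabla u$ is not justified as written, and its full-sequence argument via ``every Cauchy sequence converges in a compact space'' is not a valid use of Lemma \ref{kothe}. Your identification step — test the weak $H_0^1$-limit against the known strong $L^2$-limit and use uniqueness of weak limits — is the standard and rigorous way to close exactly the gap the paper struggles with, and the subsequence principle correctly replaces the paper's contradiction argument.
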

\begin{proof}
    Let us assume $u\in H_0^1$. Observe that:
    \[H_0^1(\Omega)\hookrightarrow\hookrightarrow L^2(\Omega)\]
    And that $H_0^1(\Omega)$ is reflexive Banach space. Consequently, $L^2(0,T;H_0^1(\Omega))$ is also reflexive and weakly sequentially closed. Moreover, the uniform bound holds: 
    \[\sup_N \parallel u^N\parallel_{L^2(0,T;H_0^1(\Omega))}\leq c\]
    By \eqref{bogachev}, $\exists$ subsequence $\{N_k\}$,
    \[u^{N_k}\to u\text{ a.e. }\]
    Since $u^N\in L^2(0,T;H_0^1(\Omega))$, it follows that $\nabla u^N\in L^2(0,T;L^2(\Omega)$, by the compactness result \eqref{kothe}, there exists a subsequence $u^{N_m}$ such that:
    \[\nabla u^{N_m}\rightharpoonup v\]
    For some $v\in L^2(0,T;L^2(\Omega))$. Suppose that $\nabla u^{N_m}(t,\cdot)\rightharpoonup v(t,\cdot)$ weakly in $L^2(\Omega)$. Then for any test function $g\in L^2(\Omega)$, it follows that:
    \[\int_\Omega \nabla u^{N_m} g\to \int_\Omega v g\]
    Applying \eqref{bogachev} to the sequence $\{\nabla u^{N_m}\}$, we obtain:
    \[\int_\Omega \nabla u^{N_n} g\to \int_\Omega v g,\ \nabla u^{N_n}\to \nabla u\text{ a.e.}\]
    Choosing $(c+1)|g|$ as a dominating function, the Dominated Convergence Theorem yields:
    \[\int_{\Omega} \nabla u^{N_n}g\to \int_\Omega \nabla u g\]
    Consequently,
    \[\int_\Omega vg=\int_\Omega \nabla u g,\ \forall g\in L^2(\Omega)\]
    which implies that $\nabla u=v$. It remains to show that this convergence holds for the entire sequence, not just for a subsequence.

    We distinguish two points:
    \begin{itemize}
        \item Whether the sequence $\{\nabla u^N\}$ converges? 
        \item Whether its weak limit coincides with $\nabla u$?
    \end{itemize}
    Suppose $\nabla u^N$ does not converge. Since it is bounded in a reflexive Banach space, and the embedding is compact \eqref{kothe}, this would contradict the fact that every Cauchy sequence must converge in a compact space. For instance, one may normalize the sequence by setting $f:=\frac{\nabla u^N}{\parallel \nabla u^N\parallel}$, and apply \eqref{kothe} to deduce compactness—leading to a contradiction. The second problem can be settled by supposing that the weak limit exists but is not equal to $\nabla u$. This would contradict the almost everywhere convergence of a subsequence $\nabla u^{N_k}\to u$ and the uniqueness of weak limits in Hilbert spaces. Finally, to verify that $u\in H_0^1(\Omega)$, observe that there exists $h\in L^2(\Omega)$ such that $\nabla u^N\rightharpoonup h$, $u^N\to u$ strongly in $L^2(0,T;L^2(\Omega))$ and $u^{N_k}\to u$ almost everywhere. By the closedness of $L^2(\Omega)$ in $H_1(\Omega)$, it follows that $\nabla u=h$ and hence $u(t,\cdot)\in H_0^1(\Omega)$.
\end{proof}

\begin{lem}
    Let all assumptions remain as above. Then
    \[\nabla\Phi(u^N)(t,\cdot)\rightharpoonup \nabla \Phi(u)(t,\cdot)\]
    \label{Nonlinear term weak convergence}
\end{lem}
\begin{proof}
    By the chain rule:
    \[\nabla \Phi(u^N)=\Phi'(u^N)\nabla u^N\]
    By \eqref{bogachev}, there exists a subsequence $\{u^{N_k}\}$, such that $u^{N_k}\to u$ almost everywhere, since $\Phi$ in continuous, we have \( \Phi'(u^{N_k}) \to \Phi'(u) \) almost everywhere as well. Moreover, from \eqref{Gradient weak convergence}, \( \nabla u^N \rightharpoonup \nabla u \) in \( L^2 \). To show weak convergence of the nonlinear term, let \( \varphi \in C_c^\infty(\Omega) \). Then:
    \[\int_\Omega \Phi'(u^N)\nabla u^N\varphi-\int_\Omega \Phi'(u)\nabla u \varphi=\int_\Omega \Phi'(u^N)(\nabla u^N-\nabla u )\varphi+\int_\Omega (\Phi'(u^N)-\Phi'(u))\nabla u\varphi\]
    We handle both terms separately. For the first term:
    \[L(1+|u^N|^{m-1})\varphi\]
    we consider take abover formula as new test function, hence its $L^2$ integrability is guaranteed by range of $m\in [1,\min\{3,\frac{2d}{d-2}\}]$ and sobolev embedding. For the second term:
    \[\bigg|\int_\Omega (\Phi'(u^N)-\Phi'(u))\nabla u\varphi\bigg|\leq \parallel \Phi'(u^N)-\Phi'(u)\parallel_{L^\infty}\bigg|\int_\Omega \nabla u\varphi \bigg|\]
    Observe that the inner integral must be finite, since $u(t,\cdot)\in H_0^1(\Omega)$, 
    \[\bigg|\int_\Omega \nabla u\varphi\bigg|\leq \parallel\nabla u\parallel_{L^2}\cdot\parallel \varphi\parallel_{L^2}\]
    It remains to verify whether $\parallel \Phi'(u^N)-\Phi'(u)\parallel_{L^\infty}=0$. Otherwise, there would exist a measurable subset $E\subset \Omega\times(0,T)$, of positive measure such that $\Phi'(u^N)\neq \Phi'(u)$ on $E$, which would contradict the almost everywhere convergence of $u^N\to u$ and the continuity of $\Phi'$.
\end{proof}

\begin{lem}
    Let $X$, $Y$ be two Banach space, with a bounded linear operator: $T\in\mathcal{L}(X,Y)$, $\{x_n\}\in X$ be a sequence weakly converges to $x\in X$, then:
    \[Tx_n\rightharpoonup Tx\text{ in }Y\]
    \label{Weak convergence preserve by bounded linear functional}
\end{lem}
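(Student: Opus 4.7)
The plan is to reduce weak convergence in $Y$ to weak convergence in $X$ by a duality (adjoint/pullback) argument. By definition, $Tx_n \rightharpoonup Tx$ in $Y$ means that $y^*(Tx_n) \to y^*(Tx)$ for every $y^* \in Y^*$, so it suffices to fix an arbitrary $y^* \in Y^*$ and establish this scalar convergence.

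First I would define the pullback functional $\phi : X \to \mathbb{R}$ (or $\mathbb{C}$) by $\phi(z) := y^*(Tz)$. Since $T \in \mathcal{L}(X,Y)$ and $y^* \in Y^*$, the composition $\phi = y^* \circ T$ is linear, and the estimate
\[
|\phi(z)| = |y^*(Tz)| \le \|y^*\|_{Y^*}\,\|T\|_{\mathcal{L}(X,Y)}\,\|z\|_X
\]
shows $\phi \in X^*$. Equivalently, one can invoke the adjoint operator $T^* : Y^* \to X^*$ and write $\phi = T^* y^*$; the boundedness of $T^*$ is standard and yields the same conclusion.

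Next, I would apply the hypothesis $x_n \rightharpoonup x$ in $X$ directly to the functional $\phi \in X^*$: this gives $\phi(x_n) \to \phi(x)$, i.e.\ $y^*(Tx_n) \to y^*(Tx)$. Since $y^* \in Y^*$ was arbitrary, the definition of weak convergence in $Y$ is satisfied and $Tx_n \rightharpoonup Tx$ in $Y$.

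There is essentially no obstacle here; the only subtle point is the verification that $y^* \circ T$ actually lies in $X^*$, which is immediate from the operator-norm inequality above. The result is really just the statement that a bounded linear operator between Banach spaces is continuous with respect to the weak topologies on its domain and codomain, and this proof is exactly the one-line chase through the definitions via the adjoint.
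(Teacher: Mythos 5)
Your proof is correct and follows essentially the same route as the paper's: both fix an arbitrary $y^* \in Y^*$, form the composite functional $y^* \circ T \in X^*$ (the paper's $f := g(T)$), and apply the weak convergence of $x_n$ to that functional. Your write-up is cleaner in that it explicitly verifies $y^* \circ T \in X^*$ via the operator-norm estimate, but the underlying argument is identical.
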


\begin{proof}
    For $x_n\rightharpoonup x$, meaning for all linear continuous functionl $f\in X^*$, 
    \[f(x)=\lim f(x_n)\]
    One may observe that a bounded linear functional in a Banach space is equivalent to continuous linear functional. Then by defining a composite based on $\forall g\in Y^*$ that $f:=g(T)$, it's clear that $f$ is linear continuous since $g$ is linear continuous functional and $T$ is linear bounded. That means:
    \[\lim g(T)x_n=g(T)x,\ \forall g\in Y^*\]
    Therefore: $Tx_n\rightharpoonup Tx\in Y$. 
\end{proof}

\begin{thm}
    With all conditions and assumptions as above, 
    \[K*\nabla \Phi(u^N(x,t))\rightharpoonup K*\nabla \Phi(u(x,t))\text{ in }L^2(0,T;L^2(\Omega;\mathbb R^d))\]
    \label{Memory kernel weak convergence}
\end{thm}
\begin{proof}
    From discussion in \eqref{Nonlinear term weak convergence}, there's result:
    \[\nabla \Phi(u^N)\in L^2(0,T;L^2(\Omega;\mathbb R^d))\]
    With assumption $K\in L^1(0,T)$, and introduce definition:
    \[g^N:=\nabla \Phi(u^N(\cdot,t))\in L^2(\Omega;\mathbb R^d)\]
    Applying \eqref{beckner} yields the following estimate:
    \[\parallel K*g^N\parallel_{L^2(0,T;L^2(\Omega;\mathbb R^d))}\leq \parallel K\parallel_{L^1(0,T)}\cdot\parallel g^N\parallel_{L^2(0,T;L^2(\Omega;\mathbb R^d))}\]
    This shows the regularity of $K*\nabla\Phi(u^N)$, finiteness meaning it's in the space $L^2(0,T;L^2(\Omega;\mathbb R^d))$. Now we check weak convergence. The technique is to show the convolution operator $\cdot \mapsto K*(\cdot)$ is a bounder linear functional on $L^2(0,T;L^2(\Omega;\mathbb R^d))$, then by \eqref{Weak convergence preserve by bounded linear functional}, we can conclude the weak convergence. Let: $L: (\cdot)\mapsto K*(\cdot)$, denote $L^2(\Omega;\mathbb R^d)=:X$, we have:
    \[\forall g\in X,\ Tg:=\int_0^tK(t-s)g(s)ds\]
    Estimate over $X$, we have:
    \[\parallel Tg(t)\parallel_X\leq \int_0^T |K(t-s)|\cdot\parallel g(s)\parallel_X ds\]
    Take $L^2$ norm over time and apply Minkovskii inequality:
    \[\parallel Tg\parallel_{L^2(0,T;X)}=\big(\int_0^T \parallel Tg(t)\parallel_{X}^2 \big)^{\frac{1}{2}}\leq \int_0^T|K(\tau)|\cdot\big( \int_\tau^T \parallel g(t-\tau)\parallel_X^2 dt \big)^{\frac{1}{2}}d\tau\]
    The trick is to apply a substitue: $y:=t-\tau$, then $t=y+\tau$, 
    \[\parallel Tg\parallel_{L^2(0,T;X)}\leq \int_0^T |K(\tau)|d\tau\cdot\parallel g\parallel_{L^2(0,T;X)}\]
    Then the operator norm satisfy:
    \[\parallel T\parallel\leq \int_0^T|K(\tau)|d\tau=\parallel K\parallel_{L^1}\]
    With all these preparations, the next step is to show the solution satisfy the weak form of the equation.
\end{proof}

\begin{thm}
    Let $\{u^N\}\subset L^2(0,T;H_0^1(\Omega))$ be the sequence of the approximation solutions to the problem:
    \[\partial_t u^N=\nabla \cdot (D(x)\int_0^t K(t-s)\nabla \Phi(u^N(x,s)))+f(x,t)\]
    With substitue initial condition: $u^N(0)=u_0^N\to u_0(x)$ strongly in $L^2(\Omega)$ and external force $f\in L^2(0,T;H^{-1}(\Omega))$, then limit function:
    \[u:=\lim u^N\in L^2(0,T;H_0^1(\Omega)\cap C[0,T];L^2(\Omega))\]
    satisfies the weak formulation:
    \[-\int_0^T \int_\Omega u\partial_t\varphi=-\int_0^T\int_\Omega D(x)\bigg(M(u(x,t))\bigg) \cdot\nabla\varphi+\int_0^T\int_\Omega f\varphi\]
    For all test functions $\varphi\in C_c^\infty(\Omega\times (0,T))$.
    \label{Limit function satisfy}
\end{thm}
\begin{proof}
    The proof can be split into 3 parts:
    \begin{enumerate}
        \item \textbf{Force term:} 

        Since $f\in L^2$ and $\varphi$ be fixed, the limit is trivial:
        \[\int_0^T\int_\Omega f\varphi\]
        \item \textbf{Time derivative term:}

        First by \eqref{friedman}:
        \[\int_0^T\int_\Omega u_t^N\varphi=\int_\Omega\int_0^T u_t^N\varphi\]
        By integrate by part:
        \[\int_\Omega\int_0^T u_t^N\varphi=-\int_\Omega \int_0^Tu^N\partial_t \varphi\]
        Again by \eqref{friedman}, there is:
        \[\int_0^T\int_\Omega u_t^N\varphi=-\int_0^T\int_\Omega u^N\partial_t\varphi\to -\int_0^T\int_\Omega u\partial_t\varphi\]
        \item \textbf{Nonlinear Memory term:}

        \eqref{Memory kernel weak convergence} yields weak convergence in $L^2$:
        \[K*\nabla \Phi(u^N)\rightharpoonup K*\nabla \Phi(u)\]
        Since $D(x)$ satisfy uniform ellipticity bounds, in other words: $D\in L^\infty(\Omega)$, hence
        \[\bigg|\int_0^T\int_\Omega D(K*\nabla\Phi(u^N)-K*\nabla\Phi(u))\nabla \varphi\bigg|\leq D_{\max} \bigg|\int_0^T\int_\Omega K*\nabla\Phi(u^N)\nabla \varphi-K*\nabla\Phi(u)\nabla \varphi\bigg|\]
        is a explicit formula for weak convergence.
    \end{enumerate}
    Combining all limits, it can be concluded that the limit $u:=\lim u^N$ satisfies the weak formulation as claimed.
\end{proof}

\section{Discussion}
The global well-posedness of weak solutions for nonlinear memory-type diffusion equations has been demonstrated in this study. The combination of a time-convolution kernel and spatial randomness introduces significant analytical challenges, which we addressed using orthogonal approximation, compactness arguments, and energy estimates.

\ 

\noindent This analysis establishes a foundation for studying nonlocal-in-time diffusion phenomena. Future work may consider extending these results to more general nonlinearities, degenerate or unbounded random coefficients, or stochastic forcing terms. These results may also serve as a theoretical foundation for future developments in homogenization theory and the rigorous analysis of random dynamical systems with memory.

\bibliographystyle{plainnat}
\bibliography{sample}

\end{document}